\newcommand{\dotminus}{\mathbin{\text{\@dotminus}}}
\newcommand{\@dotminus}{%
  \ooalign{\hidewidth\raise1ex\hbox{.}\hidewidth\cr$\m@th-$\cr}%
}
\let\myacute=\'
\def\<{\langle}
\def\>{\rangle}
\def\Z{\mathbb{Z}}
\def \begindm {\begin{displaymath}}
\def \enddm {\end{displaymath}}
\newtheorem{thm}{Theorem}[section]
\newtheorem{lemma}{Lemma}[section]
\newtheorem{cor}{Corollary}[section]
\numberwithin{equation}{section}
\long\def\symbolfootnote[#1]#2{\begingroup\def\thefootnote{\fnsymbol{footnote}}\footnote[#1]{#2}\endgroup}
\title{Truncations of Ordered Abelian Groups}
\author[P. D'Aquino]{Paola D'Aquino} 
\address{Dipartimento di Matematica, Universit\`{a} della Campania "L. Vanvitelli", viale Lincoln, 5, 81100
Caserta, Italy}
\email{paola.daquino@unicampania.it}
\author[J. Derakhshan]{Jamshid Derakhshan}
\address{St Hilda's College, University of Oxford, Cowley Place, Oxford OX4 1DY, UK}
\email{derakhsh@maths.ox.ac.uk}
\author[A. Macintyre]{Angus Macintyre${}^{\dag}$}
\address{School of Mathematical Sciences, Queen Mary, University of London, Mile End Road, London E1 4NS, and 
School of Mathematics, University of Edinburgh, King's Buidings, Peter Guthrie Tait Road
Edinburgh EH9 3FD, UK}
\email{angus@eecs.qmul.ac.uk}
\thanks{${}^{\dag}$Supported by a Leverhulme Emeritus Fellowship}
\begin{document}

\keywords{}

\subjclass[2000]{}

%\begin{abstract} 

%\end{abstract}

\maketitle

\section{\bf Introduction}\label{sec-introduction}

This paper is a component of two pieces of research, one by D'Aquino and Macintyre \cite{PDAJM},\cite{elem-prod}, and one by Derakhshan and  Macintyre \cite{elem-rest}. The common theme is the model theory of local rings $V/(\alpha)$, where $V$ is a Henselian valuation domain, and $\alpha$ is a nonzero nonunit of $V$. If $v: V\rightarrow P$ is the valuation of $V$, with $P$ the semigroup of non-negative elements of the value group $\Gamma$ of the fraction field $K$ of $V$, $v$ induces a ''truncated valuation'' from $V/(\alpha)$ onto the segment $[0,v(\alpha)]$ of $P$ defined by $v(x+(\alpha))=v(x)$ if $v(x)<v(\alpha)$, and $v(x+(\alpha))=v(\alpha)$ if $v(x)\geq v(\alpha)$. 

The segment $[0,v(\alpha)]$ inherits from $\Gamma$ an ordering $\leq$, with $0$ as least element and $v(\alpha)$ as greatest element. From our assumption on $\alpha$, $v(\alpha)\neq 0$.

Next, $[0,v(\alpha)]$ gets a truncated semi-group structure as follows. Let $\oplus$ be the addition on $\Gamma$. Define, for 
$\gamma_1,\gamma_2\in [0,v(\alpha)]$
$$\gamma_1+\gamma_1=\mathrm{min}(\gamma_1\oplus\gamma_2,v(\alpha)).$$
The basic laws are
$$v(x+y)\geq \mathrm{min}(v(x),v(y))$$
$$v(xy)=\mathrm{min}(v(\alpha),v(x)+v(y)).$$
Ideas connected to this, but much more sophisticated, appear in the work of Hiranouchi \cite{hiranouchi-survey},\cite{hiranouchi-ext}.

Forgetting the valuation in the preceding, we have an ordered abelian group $\Gamma$, with order $\leq$, addition $\oplus$, subtraction $\ominus$, zero $0$, and a distinguished element $\tau>0$ (where $\tau=v(\alpha)$ in the preceding). We then define as above a "truncated addition" $+$ on $[0,\tau]$, giving us an example of a truncated ordered abelian group (TOAG). 

Our main result produces a natural first-order set of axioms in the language $\{\leq,0,\tau,+\}$ for truncated ordered abelian groups (these axioms are true in initial segments of ordered abelian groups), and proves that each truncated ordered abelian group (i.e. each model of these axioms) is an initial segment of an ordered abelian group.

\section{\bf The axioms}

\

1.2. {\it Obvious axioms.} The following are obvious, via immediate calculations in $P$.

{\it Axiom} 1. Addition $+$ is commutative.

\

{\it Axiom} 2. $x+0=x$.

\

{\it Axiom} 3. $x+\tau=\tau$.

\

{\it Axiom} 4. If $x_1\leq y_1$ and $x_2\leq y_2$ then $x_1+x_2\leq y_1+y_2$.

\

1.3. {\it Less obvious axioms.}

\

{\it Axiom} 5. Addition $+$ is associative.

\

{\it Verification.} Suppose $x,y,z$ in $[0,\tau]$.

\

{\it Case} 1. $x \oplus y \oplus z \leq \tau$. Then
$$x+(y+z)=x+(y\oplus z)=x\oplus (y\oplus z)=(x\oplus y)\oplus z=(x+y)+z$$

\

{\it Case} 2. $x\oplus y \oplus z >\tau$.

\

{\it Subcase} 1. $y\oplus z\geq \tau$ and $x\oplus y \geq \tau$. Then 
$$x+(y+z)=x+\tau=\tau$$
$$(x+y)+z=\tau+z=\tau.$$

\

{\it Subcase} 2. $y\oplus z <\tau, x\oplus y \geq \tau$. 

Then 
$$x+(y+z)=x+(y\oplus z)=min(x\oplus (y\oplus z),\tau)=min(x\oplus y\oplus z,\tau)=\tau,$$
$$(x+y)+z=\tau+z=\tau.$$

\

{\it Subcase} 3. $y\oplus z\geq \tau$ and $(x\oplus y)<\tau$. 

Then
$$x+(y+z)=x+\tau=\tau$$
$$(x+y)+z=(x\oplus y)+z=min((x\oplus y)\oplus z,\tau)=min(x\oplus y\oplus z,\tau)=\tau.$$

\

{\it Subcase} 4. $y\oplus z<\tau,~ x\oplus y<\tau$.

\

Now $x\oplus (y\oplus z)=(x\oplus y)\oplus z$ and so $x\oplus(y+ z)=(x+ y)\oplus z$ and so
$$min(x\oplus (y+z),\tau)=min((x+y)\oplus z,\tau),$$
so $x+(y+z)=(x+y)+z$.

\

1.4. {\it Axioms concerning cancellation.}

\

{\it Axiom} 6. If $x+y=x+z <\tau$, then $y=z$.

\

{\it Verification.} $x\oplus y=x+y$ and $x\oplus z=x+z$ in this case, so use cancellation in $P$.

\

{\it Axiom} 7. If $x\leq y<\tau$, then there is a unique $z$ with $x+z=y$.

\

{\it Verification.}  Immediate from definition and the fact that $P$ is the non-negative part of $\Gamma$.

\

{\it Notation.} We write $y \dotminus x$ for the $z$ in the above.

\

{\it Axiom} 8. There are (in general, many) $z$ in $[0,\tau]$, for $x$ in $[0,\tau]$, so that $x+z=\tau$, and there is a minimal one to be denoted $\tau \dotminus x$.

\

{\it Verification.} Obvious by working in $P$ and taking $\tau \dotminus x=\tau -x$.

\

We define $\tau \dotminus \tau$ as $0$. 

\

{\it Axiom} 9. $\tau \dotminus (\tau \dotminus x)=x$.

\

{\it Verification.} For $x<\tau$, 
$$\tau \dotminus (\tau \dotminus x)=\tau - (\tau-x).$$ 
For $x=\tau$, 
$$\tau \dotminus (\tau \dotminus x)=\tau \dotminus 0=\tau=x.$$

\

1.5. {\it Crucial Axioms.} There now follows a series of axioms which are basic in what follows. It is not clear to us what are the dependencies between these axioms over the preceding nine.

\

{\it Axiom} 10. Suppose $0\leq x,~ y<\tau$ and $x+y=\tau$. Then $y\dotminus(\tau \dotminus x)=x\dotminus(\tau \dotminus y)$.

\

{\it Verification.} Both are equal to $(x\oplus y) \ominus \tau$.

\

Before getting to the remaining axioms, we prove some useful lemmas without using Axiom 10.

\

1.6.

\begin{lemma}\label{lem1} Suppose $0\leq y\leq z<\tau$. Then $\tau\dotminus z \leq \tau\dotminus y$.\end{lemma}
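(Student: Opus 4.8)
The plan is to argue entirely from Axioms 1--9, never descending into the ambient group $\Gamma$, since the point of these lemmas is to feed the representation theorem and so they must be derived inside the axiom system. Write $a=\tau\dotminus y$ and $b=\tau\dotminus z$. By the defining property in Axiom 8 these satisfy $y+a=\tau$ and $z+b=\tau$, and moreover $b$ is the \emph{minimal} element $w\in[0,\tau]$ with $z+w=\tau$. The whole argument turns on transporting the equation $y+a=\tau$ up to $z$ and then exploiting this minimality.

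Concretely I would proceed as follows. First, since $y\leq z$, Axiom 4 (applied with the second summand fixed at $a$) gives $\tau=y+a\leq z+a$. Second, because $\tau$ is the greatest element of $[0,\tau]$ and $+$ takes its values in $[0,\tau]$, we also have $z+a\leq\tau$; combining the two inequalities yields $z+a=\tau$. Thus $a$ is one of the elements $w$ with $z+w=\tau$. Finally, invoking the minimality clause of Axiom 8 for $\tau\dotminus z$, I conclude $b=\tau\dotminus z\leq a=\tau\dotminus y$, which is exactly the assertion of the lemma.

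The only genuinely delicate point is to use the minimality in Axiom 8 in the \emph{correct direction}. One is tempted instead to observe (again via Axiom 4) that $y+b\leq z+b=\tau$ and to reason from the element $b$, but this only yields $\tau\dotminus y\leq\tau\dotminus z$ in the case $y+b=\tau$, the wrong inequality. The right move is the opposite one: to feed the witness $a=\tau\dotminus y$ into the minimality defining $\tau\dotminus z$, and Axiom 4 is precisely what guarantees that $a$ survives as a witness for $z$. Note that no cancellation (Axioms 6--7) and no use of Axiom 9 is required, and Axiom 10 is indeed avoided, in keeping with the standing proviso.
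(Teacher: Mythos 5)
Your proof is correct and uses exactly the same ingredients as the paper's: Axiom 4 to transport the witness $a=\tau\dotminus y$ from $y$ up to $z$, and the minimality clause of Axiom 8 to conclude $\tau\dotminus z\leq a$. The paper merely states the same argument contrapositively (assuming $\tau\dotminus z>\tau\dotminus y$ and deriving $y+(\tau\dotminus y)<\tau$), so the two proofs are essentially identical.
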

\begin{proof} We have that $y+(\tau \dotminus y)=\tau$ and $z+(\tau\dotminus z)=\tau$. 

 If $(\tau \dotminus z) >(\tau \dotminus y)$, then (Axiom 8) 
$$z+(\tau \dotminus y)<\tau,$$
so
$$y+(\tau \dotminus y)<\tau,$$
contradiction.\end{proof}

\begin{lemma}\label{lem2} For $x,y<\tau$, $\tau \dotminus x=\tau\dotminus y$ implies $x=y$.\end{lemma}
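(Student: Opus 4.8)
The plan is to read Axiom 9 as the assertion that the unary operation $z\mapsto\tau\dotminus z$ is an involution on $[0,\tau]$, and then simply apply it to both sides of the hypothesised equality. Concretely, suppose $x,y<\tau$ satisfy $\tau\dotminus x=\tau\dotminus y$. Applying $\tau\dotminus(-)$ to each side gives
$$\tau\dotminus(\tau\dotminus x)=\tau\dotminus(\tau\dotminus y).$$
Now I would invoke Axiom 9, which tells us that the left-hand side equals $x$ and the right-hand side equals $y$; in each case the relevant instance is the $x<\tau$ branch of the verification of Axiom 9, where $\tau\dotminus(\tau\dotminus x)=\tau-(\tau-x)=x$. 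Hence $x=y$, as required.

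There is essentially no obstacle here: the content of the lemma is entirely absorbed into the involution identity of Axiom 9, and the argument is one line once that identity is in hand. The only point requiring a moment's care is the legitimacy of applying $\tau\dotminus(-)$ to the element $\tau\dotminus x$, whose value may well be $\tau$ (for instance $\tau\dotminus 0=\tau$); but $\tau\dotminus(-)$ is defined on all of $[0,\tau]$, so this causes no difficulty. It is worth noting that the monotonicity established in Lemma \ref{lem1} does not by itself suffice: applied to $x\leq y$ it yields only the weak inequality $\tau\dotminus y\leq\tau\dotminus x$, which cannot by itself preclude $x\neq y$. Injectivity of $\tau\dotminus(-)$ genuinely requires the involutive strength of Axiom 9 rather than mere order-preservation, and this is precisely what the proposed proof uses.
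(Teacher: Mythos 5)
Your proof is correct and is essentially identical to the paper's: both apply $\tau\dotminus(-)$ to each side of the hypothesis and conclude via the involution identity of Axiom 9. The extra remarks about $\tau\dotminus\tau$ and the insufficiency of Lemma \ref{lem1} are sound but not needed.
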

\begin{proof} Assume $\tau \dotminus x=\tau\dotminus y$. Then 
$$\tau \dotminus (\tau \dotminus x)=\tau \dotminus (\tau \dotminus y),$$ so (Axiom 9) $x=y$.\end{proof}

\begin{cor}\label{cor1} If $x<y<\tau$, then $\tau\dotminus y<\tau \dotminus x$.\end{cor}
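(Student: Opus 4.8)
The plan is to derive this strict inequality by combining the two preceding results: Lemma~\ref{lem1} supplies the weak (non-strict) monotonicity of the map $x \mapsto \tau \dotminus x$, and Lemma~\ref{lem2} supplies its injectivity; together they upgrade the weak inequality to a strict one. So this is a short deduction rather than a fresh argument, and I would not reach back to the definition of $\dotminus$ in terms of $\ominus$ at all.

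First I would obtain the non-strict inequality. From the hypothesis $x < y < \tau$ we have in particular $0 \leq x \leq y < \tau$, since $0$ is the least element of $[0,\tau]$ and $x<y$ gives $x \leq y$. Applying Lemma~\ref{lem1} with its $y$ instantiated to our $x$ and its $z$ instantiated to our $y$ then yields $\tau \dotminus y \leq \tau \dotminus x$.

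It remains to exclude equality, and this is where Lemma~\ref{lem2} enters. Suppose toward a contradiction that $\tau \dotminus y = \tau \dotminus x$; note both $x,y<\tau$, so the hypotheses of Lemma~\ref{lem2} are met. That lemma then forces $x = y$, contradicting $x < y$. Hence $\tau \dotminus y \neq \tau \dotminus x$, and combined with the weak inequality above we conclude $\tau \dotminus y < \tau \dotminus x$, as required.

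I do not expect any genuine obstacle here; the only point demanding care is the bookkeeping of hypotheses, namely checking that $0 \leq x \leq y < \tau$ really holds so that Lemma~\ref{lem1} applies, and that the strictness $x<y$ (not merely $x \leq y$) is exactly what Lemma~\ref{lem2} converts into the strict conclusion.
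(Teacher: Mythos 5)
Your proof is correct and matches the paper's argument: Lemma~\ref{lem1} gives the weak inequality $\tau \dotminus y \leq \tau \dotminus x$, and Lemma~\ref{lem2} rules out equality. (The paper's own proof cites Lemma~\ref{lem2} for the first step, which appears to be a typo for Lemma~\ref{lem1}; your attribution is the intended one.)
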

\begin{proof} By Lemma \ref{lem2},
$$\tau \dotminus y \leq \tau \dotminus x.$$
If $\tau \dotminus y=\tau\dotminus x$, then $x=y$.\end{proof}

\

1.7. {\it A miscellany of other axioms.} In the course of proving Associativity in Theorem 1 we need various axioms about $+,\dotminus$, and $\tau$. Each of these axioms is true (and with a trivial proof) in the $[0,\tau]$ coming from the ordered abelian group $\Gamma$ with $\oplus$, so it is natural to use them. One may hope to deduce them from the axioms listed already, but we have not succeeded in doing so. Thus we settle for quite a long list of "ad hoc" axioms, which we now consider in the order in which they occur in the proof of Associativity in 2.8.

\

{\it Axiom} 11. Assume $y+z<\tau$, ~ $x+(y+z)=\tau$, ~ and $y+x<\tau$. Then 

$$x\dotminus(\tau \dotminus(y+z))=z \dotminus(\tau \dotminus(x+y)).$$

\

{\it Note}. By Axiom 5 we may safely write $x+y+z$ for $x+(y+z)$ and $(x+y)+z$ and will do so henceforward.

\

{\it Verification.} In all that follows, we construe $x,y,z,\tau$ as in the $[0,\tau]$ in $(P,<_P,\oplus)$ the non-negative part of an ordered abelian group $\Gamma$, where $<$ is identified with $<_P$ (namely, the restriction of $\leq$ to $P$), and $+$ with the truncation of $\oplus$. Then 
$$x\oplus y\oplus z=\tau \oplus \epsilon,$$
for some $\epsilon$ in $P$. Now $y\oplus z<\tau$ and $y\oplus x<\tau$, so $x\oplus y\oplus z<2\tau$, so $\epsilon < \tau$. 

Let 
$$\mu=\tau-(y+z),$$
$$\delta=\tau-(x+y).$$
Then $x=\mu+\epsilon$, so 
$x\dotminus(\tau\dotminus(y+z))=\epsilon$ and $z=\delta+\epsilon$, so
$$z\dotminus(\tau\dotminus(x+y))=\epsilon,$$
giving the verification.

 The subsequent verifications are at the same level of difficulty.

\

{\it Axiom} 12. Assume $y+z<\tau$, ~ $x+y+z=\tau$, ~ $y+x=\tau$, and $z+(y\dotminus (\tau \dotminus x))<\tau$. Then 
$$z+(y\dotminus (\tau\dotminus x))=x\dotminus (\tau\dotminus (y+z)).$$ 

\

{\it Verification.} (Convention: Any time we write $A\dotminus B$ we assume $A\geq B$). We have 
$$x\oplus y \oplus z=\tau\oplus \epsilon$$ 
for some $\epsilon \in P$, and 
$$y\oplus x=\tau\oplus \gamma,$$
for some $\gamma \in P$. 

Now $x\oplus y \oplus z<2\tau$, so $\epsilon < \tau$; and $y\oplus z<2\tau$, so $\gamma<\tau$.

Let $\mu=\tau-(y+x)$. Then $x=\mu\oplus \epsilon$, so 
$$x\dotminus (\tau\dotminus(y+z))=\epsilon$$
whereas,
$$z+(y\dotminus(\tau\dotminus x))=z+(y-(\tau-x))=x+y+z-\tau=\epsilon.$$

\

{\it Axiom} 13. Assume $y+x=\tau$ and $y+z<\tau$. Then $z+(y\dotminus (\tau\dotminus x))<\tau$.

\

{\it Verification.} Let $y\oplus x=\tau\oplus \epsilon$, where $0\leq \epsilon <\tau$. Then 
$$y\dotminus (\tau \dotminus x)=\epsilon.$$
and 
$$z\oplus \epsilon=z\oplus ((y\oplus x)\ominus \tau)=(x\oplus y\oplus z)\ominus \tau<\tau$$
since $x<\tau$ and $y+z<\tau$.

\

{\it Axiom} 14. Assume $y+z=y+x=\tau$ and $z+(y\dotminus (\tau\dotminus x))<\tau$. Then 
$$x+(y\dotminus (\tau\dotminus z))=(x\dotminus (\tau\dotminus y))+x.$$

\

{\it Verification.} Let 
$$y\oplus z=\tau \oplus \epsilon$$
$$y\oplus x=\tau\oplus \delta,$$
with $0\leq \epsilon,\delta<\tau$. So 
$$y\dotminus (\tau\dotminus z))=\epsilon,$$
$$x\dotminus (\tau\dotminus y)=\delta,$$ 
$$y\oplus z\oplus \delta=\tau\oplus \epsilon \oplus \delta,$$ 
and 
$$y\oplus x\oplus \epsilon=\tau\oplus \epsilon \oplus \delta,$$
so $x+\epsilon=z+\delta$ as required.

\

{\it Axiom} 15. Assume $y+z=\tau$, ~ $y+x=\tau$, and $x+(y\dotminus (\tau\dotminus z))=\tau$. Then 
$z+(y\dotminus (\tau\dotminus x))=\tau$.

\

{\it Verification.} Let
$$y\oplus z=\tau \oplus \delta,$$
$$y\oplus x=\tau\oplus \epsilon.$$
as before. Then 
$$y\dotminus (\tau\dotminus z))=\delta$$
$$y\dotminus (\tau\dotminus x))=\epsilon.$$
$$x\oplus (y\ominus (\tau \ominus z))=(x\oplus y\oplus z)\ominus \tau$$
since
$$x+(y\dotminus (\tau\dotminus z))=\tau$$
$$x\oplus y\oplus z\geq 2\tau$$
Now $z\oplus (y\ominus(\tau\ominus x))=(x\oplus y \oplus z)\ominus \tau$, so $z+(y\dotminus (\tau\dotminus x))=\tau$ since 
$x\oplus y\oplus z\geq 2\tau$.

\

{\it Axiom} 16. Assume 
$$y+z=y+x=x+(y\dotminus (\tau\dotminus z))=\tau.$$ Then
$$(y\dotminus (\tau\dotminus x))\dotminus (\tau\dotminus z)=(y\dotminus (\tau\dot z))\dotminus (\tau\dotminus x).$$

\

{\it Verification.} 
$$(y\dotminus (\tau\dotminus x))\dotminus (\tau \dotminus z)=$$
$$((y\oplus x)\ominus \tau) \ominus (\tau \ominus z)=$$
$$(x\oplus y \oplus z)\ominus (2\tau)=$$
$$((y\oplus z)\ominus \tau)\ominus (\tau \ominus x)=$$
$$(y \dotminus (\tau\dotminus z))\dotminus (\tau\dotminus x).$$

\section{\bf Truncated ordered abelian groups and ordered abelian groups}

2.1. A truncated ordered abelian group (TOAG) is a linear order $[0,\tau]$ with a $+$ satisfying Axioms 1-16.

\

2.2. 

\begin{thm}\label{toag} Let $[0,\tau]$ be a truncated ordered abelian group with $+$ and $\leq$. 
Then there is an ordered abelian group $\Gamma$, under $\oplus$ and $\leq_{\Gamma}$, with $P$ the semigroup of non-negative elements, and an element $\tau_P$ of $P$ so that $[0,\tau]$ 
with $+$ and $\leq$ is isomorphic to $[0,\tau_P]$ with the addition and order induced by $\oplus$ and $\leq_P$ on $[0,\tau_P]$.\end{thm}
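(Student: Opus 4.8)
The plan is to reconstruct $\Gamma$ explicitly from the segment together with its truncated data. I take as underlying set $\Gamma=\Z\times[0,\tau)$, thinking of a pair $(n,r)$ as the formal element $n\tau\oplus r$, and order it lexicographically: $(n,r)<_{\Gamma}(m,s)$ iff $n<m$, or $n=m$ and $r<s$. To define $\oplus$ I track a carry. Given $r,s\in[0,\tau)$, in the TOAG exactly one of $r+s<\tau$ or $r+s=\tau$ holds. In the first case set the carry $c(r,s)=0$ and the fractional part $r\boxplus s=r+s\in[0,\tau)$. In the overflow case $r+s=\tau$ set $c(r,s)=1$ and $r\boxplus s=s\dotminus(\tau\dotminus r)$; here $r+s=\tau$ forces $\tau\dotminus r\leq s$, so by Axiom 7 this difference is a well-defined element of $[0,\tau)$, and by Axiom 10 it equals $r\dotminus(\tau\dotminus s)$, hence is symmetric in $r,s$. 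I then set
\[
(n,r)\oplus(m,s)=\bigl(n+m+c(r,s),\,r\boxplus s\bigr).
\]
This is forced by demanding $n\tau\oplus r\oplus m\tau\oplus s=(n+m)\tau\oplus(r\oplus s)$ and rewriting the overflow $r\oplus s\ominus\tau$, which is exactly the element Axiom 10 identifies.

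Next I would verify the abelian-group axioms. Commutativity is immediate from Axiom 1 and the symmetry of $r\boxplus s$ just noted. The identity is $(0,0)$ (using Axiom 2), and the inverse of $(n,r)$ is $(-n,0)$ when $r=0$ and $(-n-1,\tau\dotminus r)$ when $r\neq 0$, the latter using $r+(\tau\dotminus r)=\tau$ to trigger a single carry and Axioms 8--9 to return to $(0,0)$. For the ordering I would check translation invariance $a<_{\Gamma}b\Rightarrow a\oplus c<_{\Gamma}b\oplus c$: the carry is monotone, since $r<s$ gives $r+t\leq s+t$ by Axiom 4, so $c(r,t)\leq c(s,t)$; and within a fixed carry regime the fractional parts stay strictly ordered, which follows from Axiom 4 with the cancellation Axiom 6 in the no-overflow regime and from Lemma \ref{lem1} and Corollary \ref{cor1} in the overflow regime. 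Together these make $(\Gamma,\oplus,<_{\Gamma})$ a linearly ordered abelian group with $P=\{(n,r):(n,r)\geq_{\Gamma}(0,0)\}$ its nonnegative part.

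The main obstacle is associativity of $\oplus$, and this is precisely what the ad hoc Axioms 11--16 are for. Writing out $(a\oplus b)\oplus c$ and $a\oplus(b\oplus c)$ for $a=(n_1,r_1)$, $b=(n_2,r_2)$, $c=(n_3,r_3)$, associativity reduces to two identities in the fractional data: that the fractional parts associate, $r_1\boxplus(r_2\boxplus r_3)=(r_1\boxplus r_2)\boxplus r_3$, and that the total carry agrees, $c(r_2,r_3)+c(r_1,r_2\boxplus r_3)=c(r_1,r_2)+c(r_1\boxplus r_2,r_3)$. One then runs through the cases according to which of the partial sums $r_2+r_3$, $r_1+r_2$, and the triple sum reach $\tau$. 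The no-overflow case is Axiom 5, and each genuinely mixed case --- one fractional sum reaching $\tau$ while another does not, or both reaching $\tau$ --- is precisely one of Axioms 11, 12, 13, 14, 15, 16, invoked in that order; this is the content flagged as the proof of Associativity in 2.8.

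Finally I would set $\tau_P=(1,0)\in P$ and define $\varphi\colon[0,\tau]\to[0,\tau_P]$ by $\varphi(r)=(0,r)$ for $r<\tau$ and $\varphi(\tau)=\tau_P=(1,0)$. Since $[0,\tau_P]=\{(0,r):r\in[0,\tau)\}\cup\{(1,0)\}$, this $\varphi$ is an order isomorphism. It remains to check it carries $+$ to the truncated addition $\gamma_1+\gamma_2=\min(\gamma_1\oplus\gamma_2,\tau_P)$ induced by $\Gamma$: when $r+s<\tau$ there is no carry and $\varphi(r)\oplus\varphi(s)=(0,r+s)=\varphi(r+s)$ lies below $\tau_P$; when $r+s=\tau$ we get $\varphi(r)\oplus\varphi(s)=(1,r\boxplus s)\geq_{\Gamma}(1,0)=\tau_P$, whose truncation to $[0,\tau_P]$ is $\tau_P=\varphi(\tau)$, matching the TOAG value. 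This identifies $[0,\tau]$ with the initial segment $[0,\tau_P]$ of $\Gamma$ and proves the theorem.
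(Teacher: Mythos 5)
Your construction is essentially the paper's: the same lexicographic product with a carry governed by the Case 1/Case 2 dichotomy on fractional parts, the same appeal to Axiom 10 to make the overflow term $s\dotminus(\tau\dotminus r)$ symmetric and well defined, and the same reduction of associativity to the case analysis discharged by Axioms 11--16. The only difference is packaging: you build the whole group $\Z\times[0,\tau)$ at once (and therefore must, and correctly do, exhibit inverses), whereas the paper constructs only the positive cone $\omega\times[0,\tau)$ and recovers $\Gamma$ via the cancellation/relative-complementation lemma of 2.9.
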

\begin{proof} We begin by constructing $P$, and then we laboriously verify that it has the required properties. 

\

2.3. {\it Construction.} $P$ is $\omega \times [0,\tau)$, where $\omega$ is the set of finite ordinals $k$ under ordinal addition ($+$) and order ($\leq$). $[0,\tau)$ has the order induced form $[0,\tau]$. 

$P$ is lexicographically ordered with respect to the two orderings just specified. Let $\leq_P$ be the lexicographic order. 

Let $0=<0,0>\in P$, the least element of $P$. Let $\tau_P=<1,0>\in P$. We have $[0,\tau_P)=\{0\} \times [0,\tau)$, giving natural order isomorphisms $[0,\tau_P)\cong [0,\tau)$ and $[0,\tau_P]\cong [0,\tau]$. 

Now we define $\oplus$ on $P$.

\

2.4. {\it The Case1/Case 2 distinction for $<y,z>\in [0,\tau)^2$.} 

\

This comes up all the time in what follows, and the axioms from Axiom 10 on relate to it.

\

{\it Case 1.} $y+z<\tau$

\

{\it Case 2.} $y+z=\tau$

\

Note that both are symmetric in $y$ and $z$ and are complements of each other.

The main point is that Case 2 is equivalent to both $y\geq \tau \dotminus z$ and $z\geq \tau \dotminus y$. 

Moreover Axiom 10 implies that in Case 2 
$$y\dotminus (\tau\dotminus z)=z\dotminus (\tau\dotminus y).$$

\

2.5. {\it Defining $\oplus$.} 

\

We define $<k,y> \oplus <l,z>$ to be $<k+l,y+z>$ if $<y,z>$ in Case 1, and $<k+l+1,y\dotminus (\tau \dotminus z)>$ if $<y,z>$ in Case 2.

\

2.6.

\begin{lemma} $\oplus$ is commutative, with $0$ as neutral element.\end{lemma}
\begin{proof} Let $<k,y>,<l,z>\in P$. 

\

{\it Case} 1. If $<y,z>$ in Case 1 (and so then is $<z,y>$). Then 
$$(<k,y>\oplus <l,z>=<k+l,y+z>=<l+k,z+y>=<l,z>\oplus <k,y>.$$

\ 

{\it Case} 2. If $<y,z>$ in Case 2 (again symmetric). Then
$$<k,y>\oplus <l,z>=<k+l+1,y\dotminus (\tau \dotminus z)>=<l+k+1,z \dotminus (\tau \dotminus y)>,$$
by Axiom 10.

If $y=0$ we are in Case 1 so $<k,y>\oplus <l,z>=<k+l,z>$, so if $k=0$, this is equal to $<0,z>$.

This completes the proof in all the cases.\end{proof}

\

2.7. {\it $\leq_P$ and $\oplus$}

\begin{lemma} If $<k,x>\leq_P <l,y>$ and $<m,z> \leq_P <n,w>$, then 
$$<k,x>\oplus <m,z> ~ \leq ~ <l,y>\oplus <n,w>.$$
\end{lemma}
\begin{proof} There are four cases:

a) $(x,z)$ and $(y,w)$ both in Case 1.

b) $(x,z)$ in Case 1, $(y,w)$ in Case 2.

c) $(x,z)$ in Case 2, $(y,w)$ in Case 1.

d) $(x,z)$ in Case 2, $(y,w)$ in Case 2.

Now suppose the hypothesis of the lemma, i.e. $k\leq l, x\leq y, m\leq n, z\leq w$. 

a) $<k,x> \oplus <m,z>=<k+m,x+z)$ and 
$$(l,y)\oplus (n,w)=(l+n,y+w),$$
and the required inequality follows from the four inequalities of the hypothesis and Axioms 1-10.

b) $<k,x>\oplus <m,z>=<k+m,x+z>$, and 
$$<l,y>\oplus <n,w>=<l+n+1,y\dotminus (\tau\dotminus w)>,$$
and the required inequality follows since $k+m\leq l+n+1$.

c) $<k,x>\oplus <m,z>=(k+m+1,x\dotminus (\tau\dotminus z))$, and 
$$<l,y>\oplus <n,w>=<l+n,y+w>.$$
But since $y\geq x$, and 
$w\geq z$, we have $y+w=\tau$, contradiction. This case does not occur.

d) $<k,x>\oplus <m,z>=<k+m+1,x\dotminus (\tau\dotminus z>$, and 
$$<l,y>\oplus <n,w>=<l+n+1,y\dotminus (\tau\dotminus w)>.$$ 
But now $x\leq y$, and (by Lemma \ref{cor1}) $\tau\dotminus z\geq \tau\dotminus w$, so 
$$x\dotminus(\tau\dotminus z)\leq y-(\tau\dotminus z)\leq y-(\tau\dotminus w)=y\dotminus (\tau\dotminus w)$$
(we are in Case 2), giving the result.\end{proof}

\

2.8. {\it Associativity.} Verifying this is the most tedious task of all. We can profit a bit from having already proved commutativity. We need to prove:

$$(1) \ \ \ \ \ \ \ \ <k,x>\oplus (<l,y>\oplus <m,z>) =$$
$$(2) \ \ \ \ \ \ \ \ (<k,x>\oplus <l,y>)\oplus <m,z>=$$
$$(3) \ \ \ \ \ \ \ \ <m,z>\oplus (<l,y>\oplus <k,x>).$$
So we should calculate $(1)$ via, firstly, Case of $(y,z)$ and then Case of $x$ and right-hand coordinate of $<l,y>\oplus <m,z>$. Then do same for $(3)$, switching $x$ and $z$. 

The simplest situation for $(1)$ is:

\

{\it Situation} 1. $(y,z)$ in Case 1 and then right-hand coordinate of $<l,y>\oplus <m,z>$ is in Case 1 with $x$.
So calculation gives $y+z+x <\tau$ and value of $(1)$ is 
$$<k+l+m,x+y+z>.$$
This is exactly the same as we get from 
$(3)$ assuming $(y,x)$ in Case 1, and so is $z$ with right-hand coordinate of $<l,y>\oplus <k,x>$, and so we verify one instance of associativity, when $x+y+z<\tau$.

\

{\it Situation} 2. $(y,z)$ in Case 1, and $x$ in Case 2 with right-hand coordinate of $<l,y>\oplus <m,z>$. So $y+z<\tau$ but $x+(y+z)=\tau$. Then value of $(1)$ is 
$$<k+l+m+1,x\dotminus (\tau\dotminus (y+z))>$$
(and bear in mind that $x\dotminus (\tau\dotminus (y+z))=(y+z)\dotminus (\tau\dotminus x)$).

With the same assumptions on $x,y,z$ we try to calculate the value of $(3)$,i.e. of $<m,z>\oplus (<l,y>\oplus <k,x>)$. 
From the preceding we have $y+z<\tau$ and $x+(y+z)=\tau$. 

Now we try to calculate $<l,y>\oplus <k,x>$.

\

{\it Subcase} 1. $y+x<\tau$. Then
$$<l,y>\oplus <k,x>=<L+k,y+x>$$
and since $x+(y+z)=\tau$ we have $z+(x+y)=\tau$, whence Case 2 for $z$ and $y+x$, whence
$$<m,z>\oplus (<l,y>\oplus <k,x>)=<m+l+k+1,z\dotminus (\tau\dotminus (x+y))>.$$

Now, Axiom 11 gives that if $$x+y+z=\tau,$$ 
and $y+z<\tau$ and $y+x<\tau$, then 
$$x\dotminus (\tau\dotminus (y+z))=z\dotminus(\tau\dotminus (x+y)),$$
so we have $(1)=(3)$ in this subcase.

\

{\it Subcase} 2. $y+x=\tau$. Then
$$<l,y>\oplus <k,x>=<l+k+1,x\dotminus(\tau\dotminus y)>= <l+k+1,y\dotminus(\tau \dotminus x)>$$

\

{\it Subcase} 2.1. $z+(y\dotminus (\tau \dotminus x))<\tau$. So $(3)$ is
$$<m+l+k+1,z+(y\dotminus (\tau \dotminus x))>$$
and by Axiom 12 we have $(1)=(3)$ in this subcase.

\

{\it Subcase} 2.2. $z+(y\dotminus (\tau\dotminus x))=\tau$. By Axiom 13 this is impossible.

\

{\it Situation} 3. $(y,z)$ in Case 2, and $x$ in Case 1 with right-hand coordinate of $<l,y>\oplus <m,z>$. 

So $y+z=\tau$, whence 
$$<l,y>\oplus <m,z>=<l+m+1,y\dotminus (\tau\dotminus z)>=<l+m+1,z\dotminus (\tau\dotminus y)>$$

Then $x+((y\dotminus (\tau\dotminus z))<\tau$, and so 
$$<k,x>\oplus (<l,y>\oplus <m,z>)=<k+l+m+1,x+(y\dotminus (\tau \dotminus z))>$$
which equals the value of $(1)$.

With the same assumptions on $(x,y,z)$ we try to compute $(3)$. The assumptions $y+z=\tau$ and 
$x+(y\dotminus (\tau \dotminus z))<\tau$.

\

{\it Subcase} 1. $y+x<\tau$. Then $<l,y>\oplus <k,x>=<l+k,y+x>$.

\

{\it Subcase} 1.1. $z+(y+x)<\tau$. This is impossible, since $y+z=\tau$ is assumed.

\

{\it Subcase} 1.2. $z+(y+x)=\tau$. 

So $(3)$ equals 
$$<k+l+m+1,z\dotminus (\tau\dotminus (y+x))>=<k+l+m+1,(y+x)\dotminus (\tau\dotminus z)>$$
which equals (1) by Axiom 11.

\

{\it Subcase} 2. $y+x=\tau$. Then 
$$<l,y>\oplus <k,x>=<l+k+1,y\dotminus (\tau\dotminus x)>=<l+k+1,x\dotminus (\tau \dotminus y)>.$$

{\it Subcase} 2.1. $z+(y\dotminus (\tau\dot x))<\tau$. 

So $(3)$ equals
$$<k+l+m+1,(x\dotminus (\tau\dotminus y))+z>=<k+l+m+1,(y \dotminus (\tau \dotminus x))+z>$$
which equals $(1)$ by Axiom 14. 

\

{\it Subcase} 2.2. $z+(y\dotminus (\tau\dotminus x))=\tau$. 

But this together with the assumptions of the situation contradicts Axiom ?. So it does not occur.

\

{\it Situation} 4. $(y,z)$ in Case 2 and $x$ in Case 2 with right-hand coordinate of $<l,y>\oplus <m,z>$. 

So $y+z=\tau$
whence
$$<l,y>\oplus <m,z>=<l+m+1,y\dotminus (\tau \dotminus z)>=<l+m+1,z\dotminus (\tau \dotminus y)>.$$
Then $x+(y\dotminus (\tau \dotminus z))=\tau$, so 
$$<k,x>\oplus (<l,y>\oplus <m,z>)=<k+l+m+2,x\dotminus (\tau \dotminus (y\dotminus (\tau \dotminus z)))>=<k+l+m+2,(y\dotminus (\tau \dotminus z)) \dotminus (\tau \dotminus x)>,$$
giving the value of $(1)$.

With the same assumptions on $(x,y,z)$, we try to compute $(3)$. 

The assumptions are $y+z=\tau$ and $x+(y\dotminus (\tau \dotminus z))=\tau$.

\

{\it Subcase} 1. $y+x<\tau$. 

But this is inconsistent with Axiom 6 since $x+(y\dotminus (\tau \dotminus z))=\tau$. 

\

{\it Subcase} 2. $y+x=\tau$. So
$$<l,y>\oplus <k,x>=<l+k+1,y\dotminus (\tau \dotminus x))=<l+k+1,x\dotminus (\tau \dotminus y)>.$$

\

{\it Subcase} 2.1. $z+(y\dotminus (\tau\dotminus x))<\tau$. 

This is inconsistent by Axiom 15, since 
$$x+(y\dotminus (\tau \dotminus z))=\tau$$

\

{\it Subcase} 2.2. $z+(y\dotminus (\tau\dotminus x))=\tau$. 

Then $(3)$ equals 
$$<k+l+m+2,(y\dotminus (\tau \dotminus x))\dotminus (\tau \dotminus z)$$ 
and by Axiom 16 this equals 
$$<k+l+m+2,(y\dotminus (\tau\dotminus z)) \dotminus (\tau \dotminus x)>$$
so $(1)=(3)$.

This concludes the proof that $\oplus$ is associative.

\

2.9. {\it Cancellation.} In the preceding we have established that $(P,\leq_P,\oplus)$ is a commutative ordered monoid with $0$ as least element. In 2.7 we show (en passant) that
$<k,x>\leq <k,x>\oplus <l,y>$ for any $<k,x>$, $<l,y>$.

It remains to prove cancellation or relative complementation, namely:

\begin{lemma} If $<k,x> \leq <l,y>$ then there exists $<m,z>$ with
$$<k,x>\oplus <m,z>=<l,y>.$$
\end{lemma}
\begin{proof} Assume $<k,x>\leq <l,y>$. Then $k\leq l$. If $k=l$, then $x\leq y$. So take $m=0$ and $z=y\dotminus x$. 

If, however, $k<l$, there are two cases.

\

{\it Case} 1. $x\leq y$. So take $m=l-k$ and $z=y\dotminus x$.

\

{\it Case} 2. $y<x$. 

\

We need $<m,z>$ so that 
$$<l,y>=<k,x>\oplus <m,z>,$$
and as usual the Case 1/Case 2 distinction on $x,z$ 
intervenes. But now $z$ is the unknown, with $x,y$ given.

Suppose $z$ can be found in Case 1. So $x+z<\tau$, and then $y=x+z$, so $y\geq x$, contradicting our assumption. 

Thus $z$ can be found, if at all, only in Case 2, and then 
$$y=x \dotminus (\tau \dotminus z)=z\dotminus (\tau \dotminus x),$$ 
so $\tau \dotminus z=x \dotminus y$. 

So $z=\tau \dotminus(x\dotminus y)$. What about $m$? We want 
$$<l,y>=<k+m+1,x\dotminus(\tau \dotminus z)>,$$
so we need only 
$$l=k+m-1,$$
so $m=(l-k)-1\geq 0$.\end{proof}
This completes the proof of Theorem \ref{toag}.\end{proof}

\

2.10. We are going to use Theorem \ref{toag} only for discretely ordered TOAG, in fact only the 
$[0,\tau]$ coming from models of Presburger arithmetic. 

\

2.11. {\it Presburger truncated ordered abelian groups.} In our applications we take Presburger arithmetic (cf. \cite{presburger}) to be formulated in the language of ordered groups with a distinguished constant $1$ (to denote the least positive element). We generally drop the distinction between the group $\Gamma$ and its non-negative part.

Note that $0\neq 1$ in models of Presburger.

We now consider Presburger truncated ordered Abelian groups, i.e. truncated ordered Abelian groups of the form $[0,\tau]$ with distinguished element $1$, the least positive element (we do not insist that $1<\tau$), which are truncations of models of Presburger. 

\begin{thm}\label{thm2} A truncated ordered Abelian group $[0,\tau]$ with least positive element $1$ is a Presburger truncated ordered Abelian group if and only if it satisfies the following conditions:
\begin{itemize}
\item $[0,\tau]$ is discretely ordered and every positive element is a successor,
\item For each positive integer $n$ and each $x$ in $[0,\tau]$ there is a $y$ in $[0,\tau]$ and an integer $m<n$ such that 
$$x=ny+m=(y+\dots+y)+(1+\dots+1).$$
\end{itemize}
\end{thm}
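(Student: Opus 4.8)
The plan is to prove the two implications separately, using the standard axiomatisation of Presburger arithmetic throughout: an ordered abelian group, discretely ordered with least positive element $1$, is a model of Presburger arithmetic if and only if for every $n\ge 1$ it satisfies the divisibility axiom $\forall x\,\exists y\,\bigvee_{m=0}^{n-1}(x=ny+m)$. For \emph{necessity}, suppose $[0,\tau]\cong[0,\tau_P]$ for a Presburger model $\Gamma$. Discreteness of $[0,\tau]$ is inherited from $\Gamma$ since its order is the restriction of $\leq_\Gamma$, and every positive $x\le\tau$ is a successor because its $\Gamma$-predecessor $x-1\ge 0$ gives $(x-1)+1=x$ with the truncated sum agreeing with $\oplus$ (for $x=\tau$ this reads $(\tau\dotminus 1)+1=\tau$). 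For the divisibility condition, given $x\in[0,\tau]$ and $n$, apply Euclidean division in $\Gamma$ to the nonnegative element $x$: there are $y'\ge 0$ and $0\le m<n$ with $x=ny'+m$ in $\Gamma$. Since $y'\le x\le\tau_P$ we have $y'\in[0,\tau_P]$, and because the result does not exceed $\tau_P$ no truncation occurs, so the truncated sum $ny'+m$ equals $x$ in $[0,\tau]$, supplying the required $y$ and $m$.

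For \emph{sufficiency}, assume the two conditions. By Theorem \ref{toag} I may realise $[0,\tau]$ as $[0,\tau_P]$ inside the ordered abelian group $\Gamma$ with positive cone $P=\omega\times[0,\tau)$; it then remains only to check that $\Gamma$ is a Presburger model with the distinguished $1$ as least positive element. Using that $[0,\tau)$ is discretely ordered with greatest element $\tau\dotminus 1$ (from discreteness of $[0,\tau]$), one computes successors and predecessors in the lexicographic order on $P$ and finds that $P$, hence $\Gamma$, is discretely ordered; its least positive element is $\langle 0,1\rangle$ (or $\tau_P=\langle 1,0\rangle$ when $\tau=1$), which is exactly the image of the distinguished $1$ under the isomorphism of Theorem \ref{toag}.

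The work is in the divisibility axioms. I first record a no-overflow lemma: if $x\in[0,\tau)$ and $x=ny'+m$ in $[0,\tau]$ with $0\le m<n$, then the truncated result stays below $\tau$, so no carrying occurs and in $P$ one has $\langle 0,x\rangle=n\langle 0,y'\rangle+m\langle 0,1\rangle$ with all intermediate $\oplus$-sums in Case 1. Treating negative elements through negation, it suffices to produce for each $p\in P$ and each $n$ an $h\in\Gamma$ and $0\le m<n$ with $p=nh+m$; and I argue by induction on the first coordinate $k$ of $p=\langle k,y\rangle$. The base case $k=0$ is the no-overflow lemma applied to $y\in[0,\tau)$ via the second condition. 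For the inductive step, write $\langle k,y\rangle=\langle k-1,y\rangle\oplus\tau_P$: the induction hypothesis handles $\langle k-1,y\rangle$, so it is enough to show that $\tau_P$ is congruent modulo $n$ to a multiple of $1$.

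This last point is the crux, and the step I expect to be the main obstacle, because the second condition applied directly to $\tau$ is useless: truncation forces $n\tau=\tau$, so it yields no genuine division of $\tau_P$. The device is to apply the second condition instead to the predecessor $\tau\dotminus 1\in[0,\tau)$; writing $\tau\dotminus 1=ny''+m_1$ with $0\le m_1<n$, the no-overflow lemma lifts this to $\langle 0,\tau\dotminus 1\rangle=n\langle 0,y''\rangle+m_1$ in $P$, and since $\tau_P=\langle 0,\tau\dotminus 1\rangle\oplus\langle 0,1\rangle$ we obtain $\tau_P=n\langle 0,y''\rangle+(m_1+1)$. Hence $\tau_P\equiv m_1+1\pmod n$, a multiple of $1$, which closes the induction. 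After one final Euclidean reduction of the accumulated integer remainder modulo $n$ (using discreteness to pin the representative into $\{0,\dots,n-1\}$), the divisibility axioms hold in $\Gamma$. Thus $\Gamma$ is a Presburger model and $[0,\tau]\cong[0,\tau_P]$ is a Presburger truncated ordered abelian group.
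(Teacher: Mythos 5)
Your proposal is correct and takes essentially the same route as the paper: construct $P$ via Theorem~\ref{toag}, verify discreteness and successors, and crack the divisibility axioms by applying Condition~2 to $\tau\dotminus 1$ rather than to $\tau$, so that $\tau_P=\langle 0,\tau\dotminus 1\rangle\oplus\langle 0,1\rangle$ is congruent to an integer modulo $n$. The only difference is presentational --- the paper replaces your induction on the first coordinate $k$ by the direct decomposition $k=na+b$ and $\langle k,x\rangle=\langle na,0\rangle\oplus\langle b,0\rangle\oplus\langle 0,x\rangle$, which is the same computation in closed form.
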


({\it Note.} When $m=0$ in $\Z$, $m=0$ in $[0,\tau]$ by definition.)

\begin{proof} Necessity is clear from the axioms of Presburger. 

For sufficiency, we argue as follows. Suppose $[0,\tau]$ 
satisfies Conditions (1) and (2) (and has $1$). Build $P$ as in the proof of Theorem \ref{toag}. Clearly $1$ is the least positive element of $P$ and $P$ is discretely ordered. Let $<k,x>$ be a nonzero element of $P$, so $k\in \{0,1,2,\dots\}$ and $x \in [0,\tau)$. If 
$x\neq 0$,
$$<k,x>=<k,x\dotminus 1>+<0,1>,$$
so $<k,x>$ is a successor. If $x=0$, and $k\neq 0$, $<k,x>$ is successor of $<k-1,\tau-1>$.

So $P$ is discretely ordered, and every positive element is a successor. 

To get the Euclidean division results, fix a positive integer $n$ and some $<k,x>$ in $P$. Let $k=na+b$, for non-negative integers $a,b$ with $b<n$. Now
$$<k,x>=<k,0>\oplus <0,x>=<na,0>\oplus <b,0>\oplus <0,x>.$$
Now
$$<na,0>=\underbrace{<a,0>\oplus \dots \oplus <a,0>}_{n \ \text{times}},$$
Also, if $b>0$, 
$$<b,0>=\underbrace{<1,0>\oplus \dots \oplus <1,0>}_{b \ \text{times}}$$
and 
$$<1,0>=<0,\tau-1>+1,$$
and if $1<\tau$ (other case trivial)
$$\tau-1=nc+d,$$
for some $c\in [0,\tau-1]$, and $0\leq d<n$ (with usual conventions about multiplication by $m$).

Finally, $<0,x>$ is also of the form
$n\gamma+\delta$, with $0\leq \delta<n$ via Condition 2 for $[0,\tau]$.

Thus $<k,x>$ is congruent "modulo" $n$ to an integer less than $n$, and we are done.\end{proof}

\section{\bf Elementary theories of Presburger truncated ordered abelian groups}

3.1. When is $[0,\tau] \cong [0,\mu]$ if both are Presburger truncated ordered Abelian groups? Clearly the answer comes, by Theorem \ref{thm2}, from 
an answer to the question: what are the pure 1-types for Presburger arithmetic? 

The answer to this is well-known \cite{presburger}. Namely, the pure $1$-type of an element $x$ in $P$, the non-negative part of a model of Presburger, is determined by

(A) Whether or not $x\in \{0,1,2,\dots\}$,

(B) The remainder of $x$ modulo $n$ for each positive integer $n$.

\

3.2. \begin{thm}\label{thm3} The elementary theory of a Presburger truncated ordered Abelian groups $[0,\tau]$ is determined by the Presburger $1$-type of $\tau-1$ 
(the penultimate element of $[0,\tau]$), i.e. by

(A) Whether $\tau -1 \in \{0,1,2,\dots\}$

(B) The congruence class of $\tau-1$ modulo $n$ for each positive integer $n$.

Moreover, any Presburger $1$-type can occur for some truncated ordered Abelian groups $[0,\tau]$.\end{thm}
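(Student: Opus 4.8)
The plan is to split the statement into a \emph{determination} part (the complete first-order theory of $[0,\tau]$ depends only on the listed data) and a \emph{realization} part (the ``moreover'' clause), and to reduce both to the model theory of Presburger arithmetic via the construction of Theorem \ref{toag}. For determination I would pass from the abstract truncated group to its ambient Presburger model: given $[0,\tau]$, let $P$ be the model of Presburger built in Theorem \ref{toag}, with distinguished constant $\tau_P=\langle 1,0\rangle$. The truncated structure $[0,\tau]$ is then \emph{interpreted}, using only the named parameter $\tau_P$, inside the expanded model $(P,\oplus,\leq,0,1,\tau_P)$, since its domain is $\{x:0\leq x\leq\tau_P\}$ and its truncated addition is $x+y=\min(x\oplus y,\tau_P)$, both quantifier-free definable. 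A short computation identifies the predecessor $\tau_P-1$ with $\langle 0,\tau\dotminus 1\rangle$ and shows that ``being a standard integer'' and ``residue modulo $n$'' are computed identically on the two sides; hence the Presburger $1$-type of $\tau-1$ taken intrinsically in $[0,\tau]$ coincides with the $1$-type of $\tau_P-1$ taken in $P$.

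Next I would invoke completeness together with quantifier elimination for Presburger arithmetic in the language augmented by the congruence predicates $\{\equiv_n\}$. Since the only constants in play are $0,1,\tau_P$, every sentence of the expanded language reduces to a Boolean combination of order and congruence conditions on terms $a\tau_P+b$ with $a,b\in\Z$, and the truth of each such condition is dictated precisely by whether $\tau_P$ is finite or infinite, by its comparisons with the integers, and by its residues modulo $n$ --- that is, by the $1$-type of $\tau_P$, equivalently of $\tau_P-1$. Thus $\mathrm{Th}(P,\oplus,\leq,0,1,\tau_P)$ is a function of that $1$-type. Because an interpretation carries elementary equivalence to elementary equivalence, two Presburger truncated groups $[0,\tau]$ and $[0,\mu]$ whose elements $\tau-1$ and $\mu-1$ share a Presburger $1$-type yield elementarily equivalent expanded Presburger models, and being interpreted by the \emph{same} formulas they satisfy $[0,\tau]\equiv[0,\mu]$. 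This is the determination statement; the reverse dependence, that the theory in turn pins down the $1$-type, is immediate, since each condition ``$\tau-1=\underbrace{1+\cdots+1}_{k}$'' and each congruence ``$\tau-1\equiv r\pmod n$'' is itself a single first-order sentence in the truncated language.

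For realization I would run the construction of Theorem \ref{toag} in reverse. Fix a Presburger $1$-type $p(x)$; being a consistent complete type it is realized by some element $a$ in a model $M$ of Presburger, and $\tau_M:=a+1$ is then a positive element with $\tau_M-1=a$ (when $p$ is the type of a standard integer $k$ one may simply take $M=\Z$ and $a=k$). By the definition in 2.11 the truncation $[0,\tau_M]$ is a Presburger truncated ordered abelian group --- indeed it satisfies Conditions (1) and (2) of Theorem \ref{thm2} --- and its penultimate element $\tau_M-1=a$ realizes $p$. Hence every Presburger $1$-type occurs.

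The main obstacle is the bookkeeping in the determination step: one must verify that the interpretation of $[0,\tau]$ inside $(P,\tau_P)$ is faithful and, above all, that the \emph{intrinsically} defined invariants (A) and (B) of $\tau-1$ in the truncated structure genuinely agree with the Presburger $1$-type of $\tau_P-1$ computed in $P$, so that the quantifier-elimination argument can be transported back to the abstract truncated group. Once this matching is secured, the remaining ingredients are standard facts about Presburger arithmetic.
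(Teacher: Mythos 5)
Your proposal is correct and follows essentially the same route as the paper: the paper's proof is simply ``Immediate from the preceding,'' where ``the preceding'' is exactly the reduction you carry out --- identify a Presburger truncated group with a truncation $[0,\tau_P]$ of a Presburger model via Theorems \ref{toag} and \ref{thm2}, and then appeal to the classification of pure $1$-types in Presburger arithmetic by the invariants (A) and (B). Your write-up merely makes explicit the interpretation/quantifier-elimination bookkeeping and the realization step that the paper leaves implicit.
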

\begin{proof} Immediate from the preceding.\end{proof}

\

%3.3. The last theorem is crucial for the two papers referred to at the beginning of the paper (e.g. ?).

\bibliographystyle{acm}
\bibliography{bibadeles}

\begin{thebibliography}{1}

\bibitem{elem-prod}
{\sc D'Aquino, P., and Macintyre, A.}
\newblock Commutative unital rings elementarily equivalent to prescribed
  product products.
\newblock {\em Preprint\/}.

\bibitem{PDAJM}
{\sc D'Aquino, P., and Macintyre, A.}
\newblock Zilber problem on residue rings of models of arithemtic, part 1: the
  prime power case.
\newblock {\em Preprint\/} (2019).

\bibitem{elem-rest}
{\sc Derakhshan, J., and Macintyre, A.}
\newblock On rings elementarily equivalent to restricted products.
\newblock {\em In preparation\/}.

\bibitem{hiranouchi-survey}
{\sc Hiranouchi, T.}
\newblock Ramification of truncated discrete valuation rings: a survey.
\newblock In {\em Algebraic number theory and related topics 2008}, RIMS
  K\^{o}ky\^{u}roku Bessatsu, B19. Res. Inst. Math. Sci. (RIMS), Kyoto, 2010,
  pp.~35--43.

\bibitem{hiranouchi-ext}
{\sc Hiranouchi, T., and Taguchi, Y.}
\newblock Extensions of truncated discrete valuation rings.
\newblock {\em Pure Appl. Math. Q. 4}, 4, Special Issue: In honor of
  Jean-Pierre Serre. Part 1 (2008), 1205--1214.

\bibitem{presburger}
{\sc Presburger, M.}
\newblock On the completeness of a certain system of arithmetic of whole
  numbers in which addition occurs as the only operation.
\newblock {\em Hist. Philos. Logic 12}, 2 (1991), 225--233.
\newblock Translated from the German and with commentaries by Dale Jacquette.

\end{thebibliography}

\end{document}